\theoremstyle{plain}  
\newtheorem{theorem}{Theorem}
\newtheorem{lemma}[theorem]{Lemma}
\newtheorem{proposition}[theorem]{Proposition}
\newtheorem{corollary}[theorem]{Corollary}
\date{}
\title{Irreducible Triangulations of Surfaces with Boundary%
  \thanks{This work was done during the first author's internship at
    \'Ecole normale sup\'erieure.  The internship was funded by the
    \emph{Agence Nationale de la Recherche} under the \emph{Triangles}
    project of the \emph{Programme blanc} ANR-07-BLAN-0319.}}
\author{Alexandre Boulch\thanks{\'Ecole Polytechnique (member of
    ParisTech), Palaiseau, France,
    \protect\url{alexandre.boulch@polytechnique.edu}}\and%
  \'Eric Colin de Verdi\`ere\thanks{Laboratoire d'informatique, \'Ecole
    normale sup\'erieure, CNRS,  Paris, France,
    \protect\url{eric.colin.de.verdiere@ens.fr}}\and%
  Atsuhiro Nakamoto\thanks{Department of Mathematics, Yokohama National University, Yokohama, Japan, \protect\url{nakamoto@ynu.ac.jp}}}
\newcommand{\surf}{\mathcal{S}}
\newcommand{\VM}{V_M}
\newcommand{\BM}{\bar M}
\newcommand{\set}[1]{\{ #1 \}}
\newcommand{\Bigbar}[1]{\mathrel{\left|\vphantom{#1}\right.\n@space}}
\def\proof{\@ifnextchar[
  {\@xproof}{\@proof}}
\def\@proof{\trivlist \item[\hskip\labelsep {\it Proof.}  ]
  \ignorespaces }
\def\@xproof[#1]{\trivlist \item[\hskip\labelsep{\it #1.}]
  \ignorespaces }
\def\@toappear{} 
\long\def\toappear#1{\def\@toappear{\parbox[b]{20pc}{\footnotesize #1}}}
\def\ftype@copyrightbox{8}
\def\copyrightspace{
\@float{copyrightbox}[b]
\begin{center}
\setlength{\unitlength}{1pc}
\begin{picture}(20,1.5) 
                                                                                
\put(0,-.95){\@toappear}
\end{picture}
\end{center}
\end@float}
\begin{document}

\maketitle
\copyrightspace

\begin{abstract}
  A triangulation of a surface is \emph{irreducible} if no edge can be
  contracted to produce a triangulation of the same surface.  In this
  paper, we investigate irreducible triangulations of surfaces with
  boundary. We prove that the number of vertices of an irreducible
  triangulation of a (possibly non-orientable) surface of genus~$g\ge0$
  with $b\ge0$ boundary components is $O(g+b)$. So far, the result was known only
  for surfaces without boundary ($b=0$). While our technique yields a worse
  constant in the $O(.)$ notation, the present proof is elementary, and
  simpler than the previous ones in the case of surfaces without boundary.
\end{abstract}

\section{Introduction}
Let $\surf$ be a surface, possibly with boundary.  A \emph{triangulation}
is a simplicial complex whose underlying space is~$\surf$.  Contracting an
edge of the triangulation (identifying two adjacent vertices in the
simplicial complex) is allowed if this results in another triangulation of
the same surface. A triangulation is \emph{irreducible} (or \emph{minimal})
if no edge is contractible. Every triangulation can be reduced to an
irreducible triangulation by iteratively contracting some of its edges.

Irreducible triangulations have been much studied in the context of
surfaces without boundary.  In this paper, we extend known results to the
case of surfaces with boundary.  Specifically, we prove that the number of
vertices of an irreducible triangulation is linear in the genus and the
number of boundary components of the surface.  Compared to previous works,
our theorem and its proof have two interesting features: the result is more
general, since it applies to surfaces with boundary, and the arguments of
the proof are simpler.

\subsection{Previous Works for Surfaces Without Boundary}

We first describe previous related works, on surfaces without boundary.
Barnette and Edelson~\cite{be-ao2mh-88,be-a2mhf-89} proved that the number
of irreducible triangulations of a given surface is finite. Nakamoto and
Ota~\cite{no-nits-95} were the first to show that the number of vertices in
an irreducible triangulation is at most linear in the
genus of the surface. The best upper bound known to date is due to
Joret and Wood~\cite{jw-its-10}, who proved that this number is at most
$\max\set{13g-4,4}$.  (Here and in the sequel, $g$ is the \emph{Euler
  genus}, which equals twice the usual genus for orientable surfaces and
equals the usual genus for non-orientable surfaces.)  This bound is
asymptotically tight, as there are irreducible triangulations with
$\Omega(g)$ vertices; however, the minimal number of vertices in a
triangulation is $\Theta(\sqrt{g})$~\cite{jr-mtos-80}.

Some low genus cases were studied. Steinitz~\cite{sr-vtp-34} proved that
the unique irreducible triangulation of the sphere is the boundary of the
tetrahedron. The two irreducible triangulations of the projective plane
were found by Barnette~\cite{b-gtpp-82}, followed by the 21~irreducible
triangulations of the torus by Lawrencenko \cite{l-itt-87} and the
29~triangulations of the Klein bottle by Sulanke~\cite{s-nitkb-06}.  More
recently, Sulanke~\cite{s-gits-06,s-itlgs-06} developed a method to
generate all the irreducible triangulations of surfaces without
boundary. His algorithm rediscovered the irreducible triangulations for the
projective plane, the Klein bottle, and the torus; it also built the
irreducible triangulations of the double-torus ({396,784} triangulations)
and the non-orientable surfaces of genus three ({9,708}) and four
({6,297,982}).

Generalizations of the notion of irreducible triangulations, such as
\emph{$k$-irreducible triangulations}, also called \emph{$k$-minimal
  triangulations} ($k\ge3$), have also been
studied~\cite{mn-kts-95,grs-its-96}.  Juvan et
al.~\cite[Section~6]{jmm-scs-96} also study this concept in the case of
surfaces with boundary; their proof technique implies that the number of
irreducible triangulations is finite for every surface (possibly with
boundary).  For a more detailed survey on results on irreducible
triangulations, see Mohar and Thomassen~\cite[Sect.~5.4]{mt-gs-01}.
Higher-dimensional analogs have also been studied, and in particular
conditions ensuring that contracting an edge of a simplicial complex
preserves the topological type~\cite{degn-tpec-99,als-edsrs-11}.

\subsection{Applications of irreducible triangulations}

One motivation for studying irreducible triangulations is that, to solve
some problems on triangulations, it sometimes suffices to solve them on
irreducible triangulations. For example, on a triangulation of an
orientable surface with Euler genus $g\geq 4$ (at least two handles),
Barnette \cite[Conjecture 5.9.3]{mt-gs-01} conjectured that there always
exists a cycle without repeated vertices that is non-null-homotopic and
separating.  More generally, Mohar and Thomassen \cite[Conjecture
5.9.5]{mt-gs-01} conjectured that for every even~$h$, $0<h<g$, there exists
a cycle without repeated vertices that splits the surface into two surfaces
of genus~$h$ and~$g-h$, respectively.  To prove these conjectures, it
suffices to prove them for irreducible triangulations.  (See also the
discussion by Sulanke~\cite[Sect.~5]{s-itlgs-06}.)

Irreducible triangulations have also been used to characterize
triangulations without $K_6$-minors.  (The characterization of abstract
graphs without $K_m$-minors has been done for any $m\leq5$, but the problem
for $m\geq 6$ seems to be very difficult.)  The key fact for the
characterization is that every triangulation on a surface~$\surf$ with no
$K_6$-minor is transformed into an irreducible triangulation with no
$K_6$-minor by contracting edges.  The complete lists of irreducible
triangulations are known only for surfaces of Euler genus at most four, and
so the characterizations are done only for those
surfaces~\cite{kmn-kmtkb-08,mn-kmtcq-09,mnos-kmtns-10,mn-kmtns-12}.

Irreducible triangulations can be used to generate all triangulations of a
given surface~\cite{s-gt2m-91}.  They are also a good tool to study
diagonal flips on triangulations. Negami~\cite{n-dftss-99,n-dfts-94} used
the fact that there are finitely many irreducible triangulations to prove
that two triangulations of a surface with the same number of vertices are
equivalent under diagonal flips, provided the number of vertices is greater
than an integer depending only on the surface.  For further applications,
see the recent paper by Joret and Wood~\cite{jw-its-10} and references
therein.

\subsection{Our Result}

It turns out that the notion of irreducible triangulations extends directly
to the case of surfaces with boundary.  In this paper, we prove that the
number of vertices of such an irreducible triangulation admits an upper
bound that is linear in the genus~$g$ and the number of boundaries~$b$ of
the surface.  In more detail:

\begin{theorem}\label{Th:main}
  Let $\surf$ be a (possibly non-orientable) surface with Euler genus~$g$
  and $b$ boundaries.  Assume $g\geq1$ or $b\geq2$.  Then every irreducible
  triangulation of~$\surf$ has at most $570g+385b-573$ vertices, except for
  the case of the projective plane ($g=1$ and $b=0$), in which the bound
  is~$186$.
\end{theorem}

\begin{figure}
	\begin{center}
		\includegraphics[width=.7\linewidth]{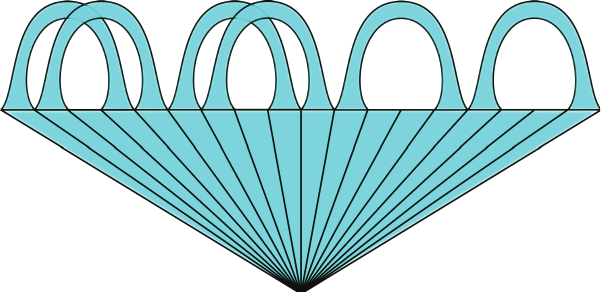}
	\end{center}
	\caption{For any even $g\ge0$ and any $b\ge1$ (one of these two
          inequalities being strict), there exists an irreducible
          triangulation of an orientable surface with Euler genus $g$ with
          $b$ boundary components, and with $5g/2+4b-2$ vertices.  The
          figure illustrates the case $g=4$ and $b=3$.  Starting with a set
          of triangles glued together, all meeting at a vertex (bottom
          part), attach a set of $g/2$ pairs of interlaced rectangular
          strips (top left) and a set of $b-1$ non-interlaced rectangular
          strips (top right), and triangulate every strip by adding an
          arbitrary diagonal (not shown in the picture).  That the
          resulting triangulation is irreducible follows from the fact that
          every edge belongs to a non-null-homotopic 3-cycle or is a
          linking edge (a non-boundary edge whose endpoints are both
          boundary vertices).  Also, note that all vertices are on the
          boundary.  In particular, taking $b=1$, we obtain an irreducible
          triangulation whose single boundary component contains $5g/2+2$
          vertices.}
	\label{F:example}
\end{figure}

This bound is asymptotically tight; see Figure~\ref{F:example}.  Compared
to the case of surfaces without boundary, the main difficulty we
encountered was to prove that the number of boundary vertices is $O(g+b)$
(there are indeed irreducible triangulations of surfaces whose single
boundary contains $\Theta(g)$ vertices, as Figure~\ref{F:example} also
illustrates); the known methods for surfaces without boundary do not seem
to extend easily to surfaces with boundary.  Our strategy is roughly as
follows.  Let $T$ be an irreducible triangulation.  First, we show that
every matching of (the vertex-edge graph of) $T$ has $O(g+b)$ vertices.
Then, we show that every inclusionwise maximal matching contains a constant
fraction of the vertices of~$T$.  For technical reasons, in the case of
surfaces with boundary, we actually need to restrict ourselves to a
matching satisfying some additional mild conditions.

In particular, we reprove that, on a surface without boundary of genus~$g$,
the number of vertices of an irreducible triangulation is~$O(g)$.  Our
method does not improve over the current best bound of $\max\set{13g-4,4}$
by Joret and Wood~\cite{jw-its-10}.  However, it is substantially different
and simpler than the other known proofs of this result.  These former
proofs, by Nakamoto and Ota~\cite{no-nits-95} and Joret and
Wood~\cite{jw-its-10} (see also Gao et al.~\cite{grt-itteo-91}), rely on a
deep theorem by Miller~\cite{m-atgg-87} (see also
Archdeacon~\cite{a-nga-86}) stating that the genus of a graph (the minimum
Euler genus of a surface on which a graph can be embedded) is additive over
2-vertex amalgams (identification of two vertices of disjoint graphs).
 Another paper by Cheng et al.~\cite{cdp-hsmit-04} also claims a linear bound without using Miller's
theorem, but this part of their paper has a flaw (personal communication
with the authors).%
\footnote{Specifically, in the proof of their Lemma~3, the authors
  incorrectly claim that there are at most $g$ pairwise non-homologous
  cycles on an orientable surface of Euler genus~$g$.}  %
In contrast, our proof is short and uses only elementary topological
lemmas.

Finally, we refine the above technique in the case of surfaces without
boundary, and obtain a bound that is better than that of
Theorem~\ref{Th:main}, but no better than the current best result by Joret
and Wood~\cite{jw-its-10}.

\bigskip

We shall introduce some definitions from topology and preliminary lemmas in
Section~\ref{S:prelim}.  We then prove our main theorem
(Section~\ref{S:main}).  Finally, in Section~\ref{S:improv}, we describe
the improvement of the technique for surfaces without boundary.

\section{Preliminaries}\label{S:prelim}

We present a few notions of combinatorial topology; for further details,
see also Stillwell~\cite{s-ctcgt-93}, Armstrong~\cite{a-bt-83}, or
Henle~\cite{h-cit-94}.

\subsection{Topological Background}

\paragraph{Surfaces, Cycles, and Homotopy.}
A \emph{surface} ($2$-manifold with boundary) is a topological Hausdorff
space where each point has an open neighborhood homeomorphic to the plane
or the closed half-plane; the points in the latter case are called
\emph{boundary points}.  Henceforth, $\surf$ denotes a compact, connected
surface.

By the classification theorem, $\surf$ is homeomorphic to a surface
obtained from a sphere by removing finitely many open disks and attaching
handles (\emph{orientable case}) or M\"obius bands (\emph{non-orientable
  case}) along some of the resulting boundaries.  In the orientable case,
the \emph{Euler genus} of~$\surf$, denoted by~$g$, equals \emph{twice} the
number of handles; in the non-orientable case, it equals the number of
M\"obius bands.  The number of remaining \emph{boundary components} is
denoted by~$b$.

In this paper, a \emph{cycle} on~$\surf$ is the image of a one-to-one
continuous map $S^1\to\surf$, where $S^1$ is the standard circle.  In
particular, we emphasize that cycles are undirected and simple.  Two cycles
are \emph{homotopic} if one can be deformed continuously to the other; more
formally, two cycles $C_0$ and~$C_1$ are homotopic if there exists a
continuous map $h:[0,1]\times S^1\to\surf$ such that $h(0,\cdot)$ is
one-to-one and has image~$C_0$, and similarly $h(1,\cdot)$ is one-to-one
and has image~$C_1$.  A cycle is \emph{null-homotopic} if and only if it
bounds a disk on~$\surf$.  We emphasize that only homotopy of cycles is
considered in this paper; for example, we say that two loops are homotopic
if and only if the corresponding cycles are homotopic (without fixing any
point of the loops).

A cycle is \emph{two-sided} if cutting along it results in a (possibly
disconnected) surface with two boundaries, and \emph{one-sided} otherwise.
Equivalently, a cycle is two-sided if it has a neighborhood homeomorphic to
an annulus, and one-sided if it has a neighborhood homeomorphic to a
M\"obius band (which implies that the surface is non-orientable).  Two
cycles in general position \emph{cross} at point~$p$ if they intersect
at~$p$ and the intersection cannot be removed by an arbitrarily small
perturbation of the cycles.  Two homotopic cycles in general position cross
an even number of times if they are two-sided, and an odd number of times
if they are one-sided.

\paragraph{Graph Embeddings, Triangulations, and Edge Contractions.}
Let $G$ be a graph, possibly with loops and multiple edges.  An
\emph{embedding} of~$G$ on~$\surf$ is a ``crossing-free'' drawing of~$G$
on~$\surf$.  More precisely, the vertices of~$G$ are mapped to distinct
points of~$\surf$; each edge is mapped to a path in~$\surf$, meeting the
image of the vertex set only at its endpoints, and such that the endpoints
of the path agree with the points assigned to the vertices of that edge.
Moreover, all the paths must be without intersection or self-intersection
except, of course, at common endpoints.  We sometimes identify~$G$ with its
embedding on~$\surf$.  The \emph{faces} of~$G$ are the connected components
of the complement of the image of~$G$ in~$\surf$.  A graph embedding~$G$
is \emph{cellular} if each of its faces is an open disk.  If it is the
case, \emph{Euler's formula} states that $|V|-|E|+|F|=2-g-b$, where $V$,
$E$, and~$F$ are the sets of vertices, edges, and faces of~$G$,
respectively.

Let $e$ be an edge of a graph $G$ embedded in the interior of~$\surf$.
Assume that $e$ is not a loop.  The \emph{contraction} of $e$ shrinks~$e$
to a single vertex; the resulting graph is in the interior of~$\surf$.
Loops and multiple edges may appear during this process
(Figure~\ref{contractionGraph}).

\begin{figure}
	\begin{center}
		\includegraphics[width=8cm]{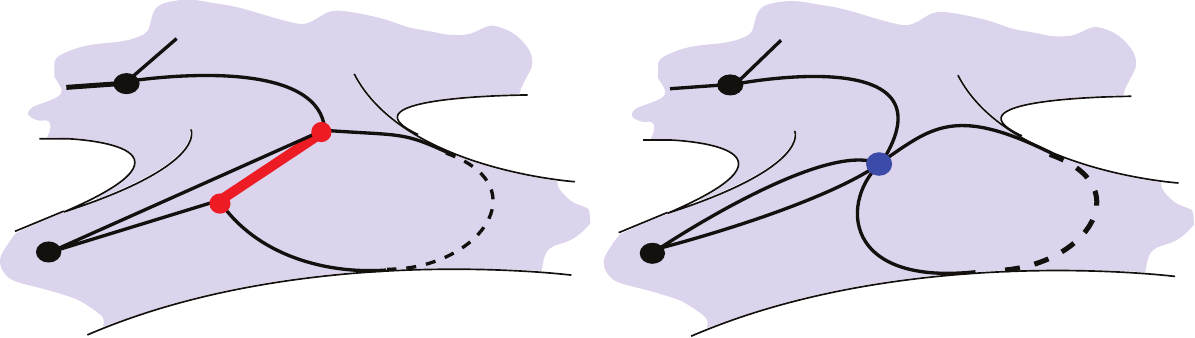}
	\end{center}
	\caption{Edge contraction on an embedded graph.}
	\label{contractionGraph}
\end{figure}

A \emph{triangulation}~$T$ on~$\surf$ is a graph without loops or multiple
edges embedded on~$\surf$ such that each face is an open disk with three
distinct vertices, and such that two such triangles intersect on a single
edge (and its two incident vertices), a single vertex, or not at all.  In
other words, the vertices, edges, and faces of~$G$ form a simplicial
complex whose underlying space is~$\surf$.

The definition of edge contraction on a triangulation is slightly different
from an edge contraction on a graph embedding.  Let $uv$ be an edge of~$T$;
contracting edge~$uv$ identifies both vertices $u$ and~$v$ in the
simplicial complex~$T$; the dimension of some simplices decreases by one.
We say that $uv$ is \emph{contractible} if the new simplicial complex is
still homeomorphic to~$\surf$ (Figure~\ref{contractionTriangulation}).

\begin{figure}
	\begin{center}
		\includegraphics[width=8cm]{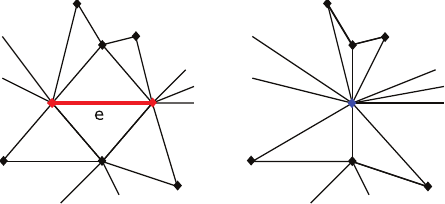}
	\end{center}
	\caption{Edge contraction on a triangulation.}
	\label{contractionTriangulation}
\end{figure}

In more detail, assume for now that $e=uv$ is an interior edge, incident
with triangles $uvx$ and~$uvy$.  Contracting~$e$ shrinks~$e$, identifying
its two vertices $u$ and~$v$, and identifies the two pairs of parallel
edges $\set{ux,vx}$ and~$\set{uy,vy}$.  The definition is similar if $e$ is
a boundary edge, except that it has a single incident triangle~$uvx$.  If
$uv$ is not a boundary edge but exactly one vertex (say~$u$) is incident to
a boundary, then the edge~$uv$ is contracted to~$u$, on the boundary.  If
this operation results in a triangulation of~$\surf$, we say that $e$ is
\emph{contractible}.  In particular, a \emph{linking edge} of~$T$ is a
non-boundary edge whose both vertices are on the boundary; a linking edge
is never contractible.

A triangulation of a surface is \emph{irreducible} if it contains no
contractible edge.  For example, it is known that the only irreducible
triangulation of the sphere is the boundary of a
tetrahedron~\cite{sr-vtp-34}.  Using a similar argument, it is not hard to
show that the only irreducible triangulation of the disk is a
single triangle.

\subsection{Preliminary Lemmas}

We list here a series of basic facts that will be used in our proof.
\begin{lemma}\label{L:noncontr}
  Assume $\surf$ is not the sphere or the disk, and let $T$ be an
  irreducible triangulation of~$\surf$.  Then every non-linking edge of~$T$
  belongs to a non-null-homotopic 3-cycle.
\end{lemma}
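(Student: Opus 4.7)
The plan is to argue by contradiction: assume $e=uv$ is a non-linking edge that lies in no non-null-homotopic $3$-cycle of $T$, and then produce a contractible non-linking edge of $T$, violating irreducibility. Since $T$ is irreducible, $e$ itself is non-contractible, so the standard link condition must fail. Hence there exists $w\in N(u)\cap N(v)$ not lying in the link of $e$ (the link is $\{x,y\}$ if $e$ is an interior edge and $\{x\}$ if $e$ lies on $\partial\surf$; the only other possible obstruction in the boundary-edge case, namely $u$ and $v$ sharing a boundary neighbor $z\ne x$, is a special case of this, while the degenerate alternative $z=x$ would force $uvx$ to be all of $\surf$, contradicting $\surf\ne$~disk). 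The simple $3$-cycle $uvw$ is null-homotopic by assumption, so it bounds a topological disk on $\surf$.

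Among all such $w$, choose one whose bounded disk $D$ contains the fewest triangles of $T$. Let $x$ be the third vertex of the triangle of $T$ incident to $uv$ on the $D$-side; then $uvx\subset D$, and since $x\ne u,v,w$, the vertex $x$ lies in the interior of $D$, so $D$ has at least one interior vertex. Minimality of $D$ moreover rules out any other common neighbor $z$ of $u,v$ in the interior of $D$: such $z$ would bound a strictly smaller disk $D'\subsetneq D$, namely the component of $\overline D\setminus(uz\cup zv)$ not containing $w$.

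View $D$ on its own as a triangulated disk with boundary cycle $uvw$ of length $3$ and at least one interior vertex. By the analogue of the present statement for disks---the only irreducible triangulation of a disk being the single triangle, as noted in the preliminaries---$D$ contains an edge $e'$ that is contractible as an edge of the disk $D$. The three boundary edges of $D$ are not contractible in $D$: for $uv$, the link in $D$ is $\{x\}$ while $\{x,w\}\subseteq N_D(u)\cap N_D(v)$, and analogous inequalities rule out $vw$ and $uw$. Since $\partial D$ has only three vertices joined pairwise by boundary edges, $e'$ has at least one endpoint $a$ strictly inside $D$.

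Finally, we check that $e'$ is a contractible non-linking edge of $T$, which contradicts irreducibility. Because $a$ is interior to $D$, the entire star of $a$ in $T$ lies in $\overline D$, so the link of $e'$ and the common-neighbor set of its endpoints are the same whether computed in $D$ or in $T$; the link condition thus transfers, making $e'$ contractible in $T$. Moreover, a boundary point of $\surf$ has a half-plane neighborhood in $\surf$, which is incompatible with being an interior point of a topological disk $D\subseteq\surf$; hence $\partial\surf\cap D\subseteq\partial D$, so $a$ is interior to $\surf$ and $e'$ is non-linking. The main obstacle is this last step: verifying carefully that no boundary edge of $D$ is contractible in $D$, that contractibility in $D$ lifts to $T$ via the star argument, and that $\partial\surf\cap D\subseteq\partial D$ even in the delicate case where $e$ itself is a boundary edge of $\surf$ so that $\partial\surf$ runs along part of $\partial D$.
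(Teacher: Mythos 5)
Your proposal is correct and follows essentially the same route as the paper, which simply cites Barnette and Edelson's Lemma~1 and notes its extension to non-linking edges of surfaces with boundary: take a $3$-cycle through the non-contractible edge that is not the boundary of a triangle, observe that if it were null-homotopic it would bound a disk containing an interior vertex, and hence a contractible edge, contradicting irreducibility. You spell out exactly this argument in full, with the obstruction analysis for boundary edges corresponding to the paper's parenthetical remark distinguishing boundary components of length at least four from those of length exactly three. One small remark: the minimality of~$D$ over all choices of~$w$, and the ensuing claim that no other common neighbor of~$u,v$ lies in the interior of~$D$, is never actually used later in your argument; a single application of the disk argument to any null-homotopic $3$-cycle $uvw$ (with $w$ outside the link of $e$) already produces the contradiction, which is how the paper phrases it.
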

\begin{proof}
  This was proved by Barnette and Edelson~\cite[Lemma~1]{be-ao2mh-88} for
  surfaces without boundary: In this case, every edge of~$T$ belongs to a
  3-cycle that is not the boundary of a triangle; if that 3-cycle is
  null-homotopic, then it bounds a disk, and an edge inside that disk must
  be contractible.  The argument immediately extends to non-linking edges
  of surfaces with boundary.  (For boundary edges, we need to distinguish
  whether the boundary has length at least four, in which case the previous
  argument applies, or exactly three, in which case the result is obvious.)\qed
\end{proof}

\begin{lemma}\label{degreeVertex}
  The degree of a non-boundary vertex of an irreducible triangulation
  of~$\surf$ is at least four.
\end{lemma}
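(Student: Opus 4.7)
The plan is to reduce the lemma to an immediate application of Lemma~\ref{L:noncontr}. I would first note a standing assumption: the result fails for the sphere, whose only irreducible triangulation is the tetrahedron and whose every vertex has degree three, so I implicitly assume $\surf$ is not the sphere (which is exactly the situation in which Lemma~\ref{L:noncontr} is stated, so nothing is lost).

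Since $v$ is an interior point, a small neighborhood of $v$ in $\surf$ is an open disk, and the link of $v$ in the simplicial complex $T$ is a topological circle, hence a cycle in the graph-theoretic sense. Consequently $\deg(v)\ge 3$ automatically, so the entire content of the lemma is to rule out $\deg(v)=3$. Suppose for contradiction that $v$ has exactly three neighbors $a$, $b$, $c$; then the link of $v$ is the $3$-cycle on $\{a,b,c\}$ and the star of $v$ is a triangulated disk with the three faces $vab$, $vbc$, $vca$. Now consider the edge $va$: since $v$ lies in the interior of $\surf$, $va$ is not a linking edge, so Lemma~\ref{L:noncontr} guarantees a non-null-homotopic $3$-cycle through $va$. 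Any such $3$-cycle is of the form $vax$ for some common neighbor $x$ of $v$ and $a$ in the $1$-skeleton of $T$; but the neighbors of $v$ are only $a$, $b$, $c$, so $x\in\{b,c\}$. Each of the two candidates $vab$ and $vac$ bounds a face of $T$ and is therefore null-homotopic, contradicting Lemma~\ref{L:noncontr}.

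There is essentially no obstacle here: once the right edge is selected and Lemma~\ref{L:noncontr} is invoked, the argument closes itself because the smallness of $\deg(v)$ severely restricts the possible third vertex of a $3$-cycle through any incident edge. The only point requiring care is the initial structural remark that the link of an interior vertex is a cycle, which is what guarantees both that $\deg(v)\ge 3$ and that the three faces $vab$, $vbc$, $vca$ genuinely tile a disk around $v$ with $a,b,c$ pairwise adjacent.
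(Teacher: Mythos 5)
Your proof is correct, and it is more elementary and self-contained than the paper's.  The paper delegates to a theorem of Sulanke, which establishes the stronger fact that every vertex lies on two non-separating $3$-cycles crossing at it (forcing at least four distinct neighbors).  You instead argue directly from Lemma~\ref{L:noncontr}: if an interior vertex $v$ had degree~$3$ with link $abc$, then every $3$-cycle through an incident edge such as $va$ would have to be $vab$ or $vac$, both of which bound faces and are therefore null-homotopic, contradicting the existence of a non-null-homotopic $3$-cycle through $va$.  Both proofs ultimately rest on Lemma~\ref{L:noncontr}, but yours proves exactly what is needed with less machinery, whereas the paper's citation buys a structural fact (two crossing non-separating $3$-cycles) that it does not actually need here.

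Two small remarks.  First, Lemma~\ref{L:noncontr} excludes the disk as well as the sphere; you should note that the disk case is vacuous for the present lemma (its unique irreducible triangulation, a single triangle, has no interior vertices), rather than saying only the sphere is excluded.  Second, your sentence ``Each of the two candidates $vab$ and $vac$ bounds a face of $T$'' is the crux and is correct: since the star of $v$ consists precisely of the three faces $vab$, $vbc$, $vca$, each candidate $3$-cycle is the boundary of one of these triangular faces and hence null-homotopic, regardless of what lies on its other side.
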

\begin{proof}
  This is a direct consequence of a result by Sulanke~\cite[Theorem
  1]{s-gits-06}.  Specifically, he uses Lemma~\ref{L:noncontr} to show that
  every vertex of an irreducible triangulation belongs to two
  non-separating 3-cycles crossing at that vertex.  Again, the argument
  extends to non-boundary vertices of surfaces with boundary.\qed
\end{proof}

\begin{lemma}\label{nonHomotopicCycles}
  Let $G$ be a $1$-vertex graph with $\ell$ loop edges, embedded in the
  interior of~$\surf$.  Assume that no face of~$G$ is a disk bounded by one
  or two edges.  Then $\ell\leq 3g+2b-3$, except if $\surf$ is a sphere or
  a disk (in which cases $\ell=0$).
\end{lemma}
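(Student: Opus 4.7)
The plan is a careful Euler characteristic count over the faces of $G$, driven by two complementary lower bounds on ``disk-like'' face counts.

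First I set up the face data. Taking a regular neighbourhood $\bar N(G)$ of $G$, the complement $\surf\setminus \text{int}(\bar N(G))$ splits into compact sub-surfaces $S_1,\dots,S_F$, one per face of $G$. Each $S_i$ has Euler genus $g_i$, $m_i$ boundary circles inherited from $\partial\bar N(G)$ (each corresponding to a face walk in $G$, of length $w_{i,j}$ for $j=1,\dots,m_i$), and $n_i$ circles inherited from $\partial\surf$. Connectedness of $G$, embedded in the interior of $\surf$, forces $m_i \geq 1$ for every face. Inclusion--exclusion, using $\chi(\bar N(G))=\chi(G)=1-\ell$ and $\chi(\partial\bar N(G))=0$, yields $\sum_i\chi(S_i)=1+\ell-g-b$, together with $\sum_i n_i = b$ and $\sum_{i,j}w_{i,j}=2\ell$.

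The core step is to produce two disk-count inequalities. Let $D_0$ denote the number of faces that are genuine disks. Since $m_i \geq 1$, these are exactly the faces of type $(g_i,m_i,n_i)=(0,1,0)$, and every other face satisfies $\chi(S_i)\leq 0$; hence $D_0 \geq \sum_i\chi(S_i) = 1+\ell-g-b$. Next, set $\chi_i^{\ast}:=\chi(S_i)+n_i=2-g_i-m_i$, the Euler characteristic obtained from $S_i$ after capping its $\partial\surf$-boundary circles. Summing, $\sum_i\chi_i^{\ast}=1+\ell-g$, and $\chi_i^{\ast}\leq 1$ with equality exactly for $(g_i,m_i)=(0,1)$. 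Letting $D^{\ast}$ count those faces, I get $D^{\ast}\geq 1+\ell-g$.

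The walk-length hypothesis yields the last ingredient. By hypothesis, each disk face has $w_{i,1}\geq 3$. Every other face contributes at least $m_i$ to the walk sum, as each of its $m_i$ walks has length $\geq 1$ (for a one-vertex graph with $\ell\geq 1$, each boundary circle of $\bar N(G)$ contains at least one corner at the vertex). Summing gives $2\ell \geq 3D_0 + (M-D_0) = 2D_0 + M$, with $M=\sum_i m_i$. Since every face contributes at least $1$ to $M$, certainly $M \geq D^{\ast}$; chaining everything,
\[
2\ell \;\geq\; 2D_0 + M \;\geq\; 2D_0 + D^{\ast} \;\geq\; 2(1+\ell-g-b)+(1+\ell-g) \;=\; 3+3\ell-3g-2b,
\]
which rearranges to $\ell \leq 3g+2b-3$. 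The case $\ell=0$ is immediate, with the sphere and disk appearing as the stated exceptions.

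The main obstacle---and the reason both disk counts are necessary---is that using $D_0$ alone gives only $\ell\leq 3g+3b-3$; the sharper coefficient $2b$ instead of $3b$ comes precisely from the auxiliary count $D^{\ast}$, which converts the contribution $\sum n_i=b$ into an extra disk-like count via $\chi_i^{\ast}=\chi(S_i)+n_i$.
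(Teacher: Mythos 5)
Your proof is correct, and it follows a genuinely different route from the paper's sketch. The paper reduces to the case where $G$ is inclusionwise maximal, then invokes the classification of surfaces to show that every face of a maximal $G$ is either a triangle or an annulus bounded by one loop and one boundary circle of $\surf$, and finally does a short double count. You instead work directly with $G$ as given: you cut $\surf$ along a regular neighbourhood of $G$, write down the Euler-characteristic identity $\sum_i \chi(S_i)=1+\ell-g-b$, and then extract \emph{two} lower bounds on ``disk-like'' face counts --- $D_0\geq 1+\ell-g-b$ for genuine disks, and $D^{\ast}\geq 1+\ell-g$ after capping the $\partial\surf$-boundaries --- which you feed into the face-walk count $2\ell\geq 2D_0+M$ together with the crude bound $M\geq D^{\ast}$. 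The resulting chain gives $\ell\leq 3g+2b-3$ with no case analysis. Your approach buys two things: it avoids having to classify the face types of a maximal graph (a step the paper only asserts, with explicit exceptions for the sphere, disk, and projective plane), and it treats the projective plane uniformly instead of as an excluded case. The paper's reduction-to-maximality is arguably conceptually cleaner if one is willing to take the face classification for granted, but your argument is more self-contained. One bookkeeping point worth making explicit in a write-up: the inequality $w_{i,j}\geq 1$ for every face walk holds because the boundary of $\bar N(G)$, for $\ell\geq1$, is built from alternating vertex-disk arcs and edge-band arcs, so each boundary circle traverses at least one edge band; the degenerate $\ell=0$ case is handled separately, which you do note.
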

\begin{proof}
  Barnette and Edelson~\cite[Corollary~1]{be-a2mhf-89} prove a similar
  result; see also Chambers et al.~\cite[Lemma 2.1]{ccelw-scsh-08}.  Here
  is a sketch of proof.  Without loss of generality, we can assume that $G$
  is inclusionwise maximal; namely, no edge can be added to~$G$ without
  violating the hypotheses of the lemma.  Then it follows from the
  classification of surfaces that, unless the surface is the sphere, the
  disk, or the projective plane, every face of the graph is a disk bounded
  by three edges, or an annulus bounded by a single edge and a single
  boundary component of~$\surf$.  A standard double-counting argument
  combined with Euler's formula concludes.\qed
\end{proof}
\begin{corollary}\label{nonHomotopicCycles2}
  Let $G$ be a 1-vertex graph with $\ell$ loop edges, embedded in the
  interior of~$\surf$.  Assume that no loop of~$G$ is null-homotopic and
  that no two loops of~$G$ are homotopic. Then $\ell= 0$ if $\surf$ is a 
  sphere or a disk, $ \ell \leq 1$ if $\surf$ is a projective plane, and 
  $\ell \leq 3g+2b-3$ otherwise.
\end{corollary}
\begin{proof}
  The hypotheses imply that no face of~$G$ is a disk bounded by one or two
  edges (and thus Lemma~\ref{nonHomotopicCycles} concludes), unless that
  disk is bounded by twice the same edge (in which case $\surf$ is the
  projective plane).\qed
\end{proof}

\begin{lemma}\label{sizeHomotopyClass} 
  Let $C$ be a non-null-homotopic 3-cycle in an irreducible
  triangulation~$T$ of $\surf$.  No more than nine pairwise edge-disjoint
  3-cycles of~$T$ are homotopic to~$C$.
\end{lemma}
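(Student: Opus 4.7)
The plan is to let $C_1=C,C_2,\ldots,C_k$ be a maximal family of pairwise edge-disjoint $3$-cycles of $T$ all homotopic to $C$, and to prove $k\le 9$. As a preliminary observation, because $T$ has no multi-edges, two distinct edge-disjoint $3$-cycles share at most one vertex: sharing two vertices would force both to contain the edge joining them, and sharing three would make the cycles coincide. If $C$ is one-sided I would first pass to the orientation double cover of a regular neighborhood of $C$; each one-sided $C_i$ then lifts to a pair of vertex-disjoint parallel cycles homotopic to a two-sided lift of $C$, so the two-sided bound transfers back (with the factor two absorbed into the constant). So I may assume $C$ is two-sided.

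In that case, the entire family sits in an annular neighborhood of $C$, and any two vertex-disjoint members bound an annulus on a definite side. This gives the $C_i$ a natural cyclic (nested) order, modulo vertex-sharing, which I would exploit by factoring the bound into a \emph{local} and a \emph{global} part. For the local bound, I would fix a vertex $v$ and look at all cycles of the family through $v$: each such $C_i=vab$ corresponds to a chord $ab$ in the link disk of $v$; the chords are pairwise vertex-disjoint (by edge-disjointness of the $C_i$) and they must all represent the same non-trivial free homotopy class as $C$. Using irreducibility (Lemmas~\ref{L:noncontr} and~\ref{degreeVertex}) one sees that the possible chord patterns split into at most three types, so at most three of the $C_i$ pass through $v$. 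For the global bound I would take a maximal subfamily of pairwise vertex-disjoint members; any two consecutive such members in the nesting order bound a triangulated annulus $A$ whose interior contains no further $C_i$. Lemma~\ref{L:noncontr} then forces every non-linking interior edge of $A$ to complete to a non-null-homotopic $3$-cycle, and since the third vertex of each such cycle cannot lie inside $A$ (by maximality of the subfamily), the strip is forced into a prism-like pattern rigid enough that only a bounded number (at most three) of such strips can be stacked.

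Combining the two steps gives $k\le 3\cdot 3=9$, as required. The main obstacle is the global step: one must rule out arbitrarily long stacks of pairwise vertex-disjoint parallel $3$-cycles in an irreducible triangulation. The natural route is to show that in a sufficiently long stack, some narrow inner strip admits a contractible edge, contradicting irreducibility; making this precise requires carefully using the common homotopy class to force the ``auxiliary'' third vertex produced by Lemma~\ref{L:noncontr} to lie outside the strip, rather than producing yet another member of the family and thereby breaking the contradiction.
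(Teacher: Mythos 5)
Your proposal has a real error in the one-sided case and leaves both key estimates in the two-sided case unproven. For the one-sided case, you propose lifting to the orientation double cover and claim that each one-sided $C_i$ ``lifts to a pair of vertex-disjoint parallel cycles.'' This is the wrong covering-space fact: an orientation-\emph{reversing} (one-sided) loop does not lift to a loop in the orientation double cover; its preimage is a \emph{single} connected circle double-covering it, whereas it is the two-sided loops that lift to two disjoint copies. Moreover, even if one could lift in the desired way, the lifted triangulation need not be irreducible (a non-contractible edge of~$T$ can become contractible upstairs, because its one-sided witnessing $3$-cycle from Lemma~\ref{L:noncontr} becomes a $6$-cycle), so the two-sided bound would not apply to it. The paper handles this case directly: a 3-cycle homotopic to a one-sided~$C$ must cross~$C$ exactly once, hence shares exactly one vertex with~$C$, and one bounds the number through each of the three vertices of~$C$ by~$3$, giving at most $1+3\cdot 2=7$.

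For the two-sided case your local/global factorization $k\le 3\cdot 3$ is structurally the right idea and in fact parallels the paper's, but neither estimate is actually established. The claim that ``at most three chords'' appear in the link disk of~$v$ does not follow from Lemmas~\ref{L:noncontr} and~\ref{degreeVertex}: pairwise disjoint chords of a disk can be arbitrarily numerous, so the bound must come from irreducibility producing a contractible edge; this is exactly Barnette--Edelson's pinched-annulus argument (their Lemma~7), which you do not invoke or reconstruct. Likewise, the global claim that at most three pairwise vertex-disjoint nested homotopic $3$-cycles can occur is precisely Barnette--Edelson's Lemma~9; you correctly identify ruling out long stacks as ``the main obstacle,'' but supply no argument for it. The paper obtains both steps by citing these two lemmas, after first observing that two homotopic two-sided $3$-cycles cannot cross (being edge-disjoint and crossing evenly), so they bound possibly-pinched annuli and the family is linearly ordered by nesting.
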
 
\begin{proof} 
  \emph{First case: $C$ is two-sided.}  This case is a small variation on a
  lemma by Barnette and Edelson \cite[Lemma 9]{be-ao2mh-88}.  Any two
  distinct 3-cycles homotopic to~$C$ must cross an even number of times,
  hence cannot cross at all; thus two such 3-cycles bound an annulus,
  possibly ``pinched'' on a vertex or an edge.  So the set of 3-cycles
  homotopic to a given 3-cycle can be ordered linearly.  Assume there are
  at least ten pairwise edge-disjoint 3-cycles of~$T$ homotopic to~$C$; let
  us consider ten such consecutive cycles in this ordering,
  $C_1,\ldots,C_{10}$.  See Figure~\ref{homotopyClass}.

  For every~$i$, the annulus between $C_i$ and~$C_{i+3}$ cannot be pinched
  along a vertex: otherwise, it is easy to see that an edge between
  $C_{i+1}$ and~$C_{i+2}$ would be
  contractible~\cite[Lemma~7]{be-ao2mh-88}.  This annulus cannot be pinched
  along an edge, since the cycles are edge-disjoint.  So any two
  consecutive cycles in the sequence $C_1,C_4,C_7,C_{10}$ bound a
  non-pinched annulus.  Now, similarly, some edge between $C_4$ and~$C_7$
  is contractible~\cite[Lemma~9]{be-ao2mh-88}, which is a contradiction.

  \begin{figure}
    \begin{center}
      \includegraphics[width=8cm]{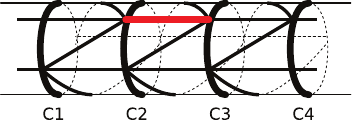} 
    \end{center}
    \caption{Illustration of the proof of Lemma~\ref{sizeHomotopyClass} (in
      the two-sided case): if there are ten homotopic edge-disjoint 3-cycles,
      there must be four pairwise disjoint homotopic 3-cycles, so there is at
      least one contractible edge.}
    \label{homotopyClass}
  \end{figure}
  
  \medskip

  \emph{Second case: $C$ is one-sided.}  In this case, any 3-cycle
  homotopic to~$C$ crosses~$C$ exactly once, and must therefore share
  exactly one vertex with~$C$.  Let $v$ be any vertex of~$C$; we prove
  below that at most two 3-cycles different from~$C$ and homotopic to~$C$
  pass through~$v$.  This proves that there are at most seven 3-cycles
  homotopic to~$C$ (including~$C$ itself), which concludes.

  So assume that (at least) four 3-cycles (including~$C$) homotopic to~$C$
  share together a vertex~$v$.  These cycles lie in a M\"obius band
  ``pinched'' at~$v$, and we can order them linearly; let $C_1,\ldots,C_4$
  be consecutive cycles in this ordering.  As in the first
  case~\cite[Lemma~7]{be-ao2mh-88}, an edge between $C_2$ and~$C_3$ would
  be contractible, a contradiction.\qed
\end{proof}

\section{Proof of Theorem~\ref{Th:main}}\label{S:main}

A \emph{matching}~$M$ of a graph~$G$ is a set of edges of~$G$ such that
every vertex of~$G$ belongs to at most one edge of~$M$.

\subsection{The Size of a Matching}

Our first task is to prove that a matching of an irreducible triangulation
has size $O(g+b)$.

\begin{proposition}\label{sizeMatching}
  Let $T$ be an irreducible triangulation of~$\surf$, where $g\geq 1$ or
  $b\geq 2$.  Let $M$ be a matching of~$T$ containing no linking edge.
  Then the number of edges of~$M$ is at most~$27$ if $\surf$ is the
  projective plane ($g=1$ and $b=0$) and $81g+54b-81$ otherwise.
\end{proposition}
\begin{proof}
  The structure of the proof is as follows.  In three steps, we remove
  edges from~$M$, obtaining successive matchings~$M_1$, $M_2$, and~$M_3$,
  each of them satisfying additional properties.  We show that the edge set
  of~$M_3$ is in bijection with the edge set of a 1-vertex graph
  on~$\surf$ where no edge is null-homotopic and no two edges are
  homotopic.  By Corollary~\ref{nonHomotopicCycles2}, this implies that
  $M_3$ has $O(g+b)$ edges.  Furthermore, we show that $M_3$ contains some
  constant fraction of the edges of~$M$, so that $M$ also has $O(g+b)$
  edges.

  Recall that every edge~$e$ of~$M$ belongs to a non-null-homotopic 3-cycle
  (Lemma~\ref{L:noncontr}); let~$C_e$ be such a cycle.

  \medskip

  \emph{Construction of~$M_1$.}  Assume that there are three distinct edges
  $e_1$, $e_2$, and~$e_3$ of~$M$ such that $C_{e_1}$ shares an edge with
  both $C_{e_2}$ and~$C_{e_3}$.  Then $C_{e_1}\cup C_{e_2}\cup C_{e_3}$ has
  at most 5 vertices, implying that two of $e_1,e_2,e_3$ have an endpoint
  in common.  This contradiction proves that every 3-cycle~$C_{e_1}$ shares
  an edge of~$T$ with at most one other 3-cycle~$C_{e_2}$.  Let $M_1$ be
  obtained from~$M$ by removing $e_1$ or~$e_2$ for every such pair of edges
  $\{e_1,e_2\}$.  By the previous property, the cycles $C_e$, $e\in M_1$,
  are edge-disjoint.  The set $M_1$ satisfies the hypotheses of the lemma,
  and $|M|\leq2|M_1|$.  Now, we forget~$M$ and focus on bounding the size
  of~$M_1$.

  \medskip

  \emph{Construction of~$M_2$.} We partition the edges~$e$ of~$M_1$
  according to the homotopy class of the corresponding 3-cycle~$C_e$.
  Let $M_2$ be obtained by choosing one arbitrary representative edge per
  class; the cycles $C_e$, $e\in M_2$, are in distinct homotopy
  classes.  We have $|M_1|\leq 9|M_2|$ by Lemma~\ref{sizeHomotopyClass} and
  since the cycles $C_e$, $e\in M_1$, are edge-disjoint.  Now, the
  cycles $C_e$, $e\in M_2$, are in distinct non-trivial homotopy
  classes and are edge-disjoint.

  \medskip

  \emph{Construction of~$M_3$.}  For every $e\in M_2$, let $\pi_e$ be the
  path of length two obtained from $C_e$ by removing $e$. We orient
  the two edges of~$\pi_e$ towards the extremities of~$\pi_e$ (which are
  also the endpoints of $e\in M_2$).  Since $M_2$ is a matching, every
  vertex of the triangulation~$T$ is the target of at most one oriented
  edge.

  \begin{figure}
    \begin{center}
      \includegraphics[width=8cm]{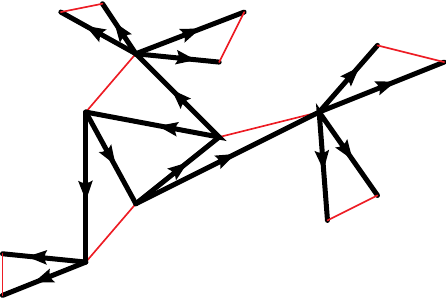}
    \end{center}
    \caption{In light lines, the matching~$M_2$; in bold lines, the
      graph~$\Pi_2$, here forming a tree plus an edge.}
    \label{graph}
  \end{figure}

  Let $\Pi_2$ be the union of the graphs $\pi_e$, $e\in M_2$.  We claim
  that $\Pi_2$ is a \emph{pseudoforest}: every connected component~$\Pi'_2$
  of~$\Pi_2$ contains at most one cycle (see Figure~\ref{graph}).  Indeed,
  every vertex of~$\Pi'_2$ is the target of at most one oriented edge, so
  the number of edges of~$\Pi'_2$ is at most the number of vertices
  of~$\Pi'_2$, and $\Pi'_2$ is connected; so a spanning tree of~$\Pi'_2$
  contains all but at most one edge of~$\Pi'_2$.

  If $\Pi'_2$ is not a tree, let $e'$ be an edge such that $\Pi'_2-e'$ is a
  tree.  The edge~$e'$ belongs to some~$\pi_e$.  We remove $e$ from~$M_2$
  (and consequently $\pi_e$ from~$\Pi'_2$); the graph $\Pi'_2$ becomes one
  or two trees, and the other connected components of~$\Pi_2$ are
  unaffected.  We do this iteratively for every connected
  component~$\Pi'_2$ of~$\Pi_2$.  Let $M_3$ be obtained from~$M_2$ after
  removing these edges.

  Before the removal of any edge of~$M_2$, if a connected component
  $\Pi'_2$ of~$\Pi_2$ is not a tree, it contains a cycle~$C$ of length at
  least three (Figure \ref{graph}); since each vertex of~$C$ has at most
  one incoming edge, the edges of~$C$ belong to distinct~$C_e$, for $e\in
  M_2$.  Therefore, when removing edges of~$M_2$ to form~$M_3$, we remove
  at most one third of the edges of~$M_2$.  So $|M_2|\leq\frac32|M_3|$.

  \medskip

  \emph{End of the proof.}  Let $\Pi_3$ be the union of the graphs $\pi_e$,
  for $e\in M_3$; by construction, $\Pi_3$ is a forest.  We now view~$T$ as
  a graph embedded on~$\surf$ (slightly moving it towards the interior
  of~$\surf$, if $\surf$ has a boundary), and contract all the edges
  of~$\Pi_3$ in this graph; this is legal since this set contains no cycle.
  Each edge~$e$ of~$M_3$ is transformed into a loop~$\ell_e$ homotopic
  to~$C_e$.  The loops $\ell_e$ form a graph~$\Gamma$ embedded on~$\surf$;
  that graph has a single vertex per connected component.  There exists a
  tree~$U$ embedded on~$\surf$ meeting~$\Gamma$ exactly at its vertex set.
  We may contract~$U$ on the surface; now $\Gamma$ is transformed into a
  set of simple, pairwise disjoint loops~$\Gamma'$ with the same vertex.
  Furthermore, the loops are non-null-homotopic and pairwise non-homotopic,
  so Corollary~\ref{nonHomotopicCycles2} implies that
  $|M_3|=|\Gamma'|\leq3g+2b-3$ (unless $\surf$ is the projective plane, in
  which case the upper bound is one).  By construction, we also have
  $|M|\leq 2|M_1|\leq 2*9|M_2|\leq 2*9*\frac32|M_3|=27|M_3|$, which
  concludes the proof.\qed
\end{proof}

\subsection{An Inclusionwise Maximal Matching Covers Many Vertices}

Now, we prove that an inclusionwise maximal matching of~$T$ must cover a
constant fraction of the edges of~$T$.

\begin{proposition}\label{verticesBound}
  Let $T$ be an irreducible triangulation of~$\surf$.  Let $W$ be a set of
  vertices of~$T$.  Let $M$ be an inclusionwise maximal matching of~$T$
  among those that avoid~$W$.  Assume further that every boundary vertex
  of~$T$ is either in~$W$ or incident to an edge of~$M$.  Then the number
  of vertices of~$T$ is at most $7|M|+4|W|+3g+3b-6$.
\end{proposition}
\begin{proof}
  Let us denote by $V$, $E$, and~$F$ the vertices, edges, and faces of~$T$,
  respectively.  Let $\VM$ be the vertices reached by~$M$ and $X$ be the
  vertices neither in~$\VM$ nor in~$W$. Let $\bar{M}$ be the set of the
  edges of~$T$ that are not in~$M$.  Thus $\set{W,\VM,X}$ is a partition
  of~$V$, and $\set{M,\BM}$ is a partition of~$E$.

  Let $v\in X$.  Recall that $v$ is a non-boundary vertex by hypothesis.
  According to Lemma~\ref{degreeVertex}, $v$ has degree at least four, so
  it is incident to at least four edges in~$\BM$.  By the maximality of the
  matching~$M$, the other vertex of each of these four edges is not
  in~$X$.  So, charging each vertex $v$ of~$X$ with these four edges,
  we obtain that $4|X|\leq |\BM|$.  

  The rest of the proof is standard machinery.  Since $T$ is a
  triangulation, by double-counting we obtain $|F|\leq\frac23|E|$ (this is
  not an equality in general since $\surf$ may have boundary).  Plugging
  this relation into Euler's formula $|V|-|E|+|F|=2-g-b$, we obtain:
  \[(|W|+|\VM|+|X|)-\frac13(|M|+|\BM|)\geq 2-g-b.\] %
  $|V_M|=2|M|$ gives, after some rearranging:
  \[|\BM|-3|X|\leq 5|M|+3|W|+3g+3b-6.\]%
  As shown above, $4|X|\leq |\BM|$, implying $|X|\leq|\BM|-3|X|$ and so
  \[|X|\leq5|M|+3|W|+3g+3b-6.\]%
  This bound on~$|X|$ allows to bound $|V|=2|M|+|W|+|X|$ in terms of $|M|$,
  $|W|$, $g$, and~$b$, implying the result.\qed
\end{proof}

\subsection{End of Proof}

The proof of Theorem~\ref{Th:main} combines Propositions~\ref{sizeMatching}
and~\ref{verticesBound}:

\begin{proof}[Proof of Theorem~\ref{Th:main}]
  Let $W$ be a set of vertices, one on each boundary component of~$\surf$
  having an odd number of vertices.  Build a matching~$M$ made of edges on
  the boundary of~$\surf$ and covering the vertices on the boundary
  of~$\surf$ that are not in~$W$.  Extend~$M$ to an inclusionwise maximal
  matching of~$T$ that avoids~$W$; we still denote it by~$M$.

  $M$ contains no linking edge by construction so, by
  Proposition~\ref{sizeMatching}, $M$ has less than $81g+54b-81$ edges
  ($27$ if $\surf$ is the projective plane).  By
  Proposition~\ref{verticesBound}, and since $|W|\leq b$, the number of
  vertices of~$T$ is at most $7|M|+3g+7b-6$.

  Combining these equations proves that $T$ has at most $570g+385b-573$
  vertices ($186$ if $\surf$ is the projective plane).\qed
\end{proof}

\section{Improvement for Surfaces Without Boundary}\label{S:improv}

The purpose of this section is to improve the previous bound when $\surf$
has no boundary ($b=0$).  The strategy is to improve the bound of
Proposition~\ref{verticesBound} using a more careful analysis.

\begin{theorem}\label{Th:main2}
  Let $\surf$ be a (possibly non-orientable) surface with Euler
  genus~$g\ge1$ and without boundary.  Then every irreducible triangulation
  of~$\surf$ has at most $f(g)$ vertices, where $f(1)=55$, $f(2)=194$,
  $f(3)=333$, and $f(g)=163g-164$ if $g\ge4$.
\end{theorem}

The following lemma appears in an article by Fujisawa et
al.~\cite[Sect.~2]{fno-hcbtg-13}; we reproduce the proof in more
detail here for convenience.
\begin{lemma}\label{lem:conncomp}
  Let $\surf$ be a surface of Euler genus~$g\ge1$ without boundary, and let
  $G=(V,E)$ be a 4-connected graph embedded on~$\surf$.  Then for every
  $U\subseteq V$, the number of components of $G-U$ is at most
  $\max\set{1,|U|+g-2}$.
\end{lemma}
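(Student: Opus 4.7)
The plan is to reduce the problem to an edge-counting argument on an auxiliary bipartite graph, combining 4-connectedness with Euler's formula. Write $k = |U|$ and let $c$ be the number of components of $G-U$. The inequality is trivial when $c \leq 1$, so I assume $c \geq 2$ and prove $c \leq k + g - 2$. Since $G$ is 4-connected and $G-U$ is disconnected, we have $k \geq 4$. The key input from 4-connectedness is that every component $C$ of $G-U$ must have at least four neighbors in $U$: otherwise $N_G(C) \cap U$ would be a vertex cut of size at most three separating $C$ from another component, contradicting 4-connectedness.

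I then introduce an auxiliary bipartite graph $H$ with vertex set $U \cup \{C_1, \ldots, C_c\}$, placing an edge $uC_i$ whenever $u$ is adjacent in $G$ to some vertex of $C_i$. The graph $H$ is simple, and being a minor of $G$ (contract each $C_i$ to a single vertex, delete the edges inside $U$, and discard resulting loops and parallel edges), it inherits an embedding on $\surf$. The observation of the previous paragraph translates into $\deg_H(C_i) \geq 4$ for each $i$, hence $|E(H)| \geq 4c$.

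For the complementary upper bound, I apply the standard Euler-type inequality for simple bipartite graphs on surfaces: since every facial walk in a cellular embedding of $H$ has even length at least $4$, the computation $|V(H)| - |E(H)|/2 \geq \chi(\surf) = 2-g$ gives $|E(H)| \leq 2(k+c) + 2g - 4$. Combining the two estimates yields $4c \leq 2(k+c) + 2g - 4$, i.e., $c \leq k + g - 2$, as required.

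The main subtlety is justifying the bipartite Euler bound on $H$ when the inherited embedding on $\surf$ might fail to be cellular, or when bridges of $H$ produce facial walks of length $2$. I would handle this either by first replacing the given embedding by a cellular embedding of $H$ on a surface of Euler genus at most $g$ (which always exists and only tightens the bound), or by noting that non-disk faces contribute non-positively to the Euler characteristic, so that $|V(H)| - |E(H)| + |F(H)| \geq 2 - g$ still combines with the length-at-least-$4$ bound on disk face boundaries to yield the needed estimate.
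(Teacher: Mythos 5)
Your argument is essentially the paper's: you contract each component of $G-U$ to a single vertex and discard $U$–$U$ edges to form a bipartite graph between $U$ and the set of components, use 4-connectedness to get a degree-4 lower bound on the component side, and close with a girth-4 Euler bound. The only difference is bookkeeping around cellularity: the paper explicitly edits the graph (adding edges to triangulate non-disk faces, deleting one edge of each digon face, keeping parallel $U$–$W$ edges along the way), while you simplify to $H$ at once and propose to either re-embed cellularly at no greater genus or to account directly for the non-positive Euler-characteristic contribution of non-disk faces; both of your suggested repairs can be made rigorous (the second needs the observation that a degree-2 disk face would force a single-edge component, impossible here since every edge of $H$ has an endpoint of degree at least four), so the proposal is correct and follows the same route as the paper.
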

\begin{proof}
  We can assume that $U\neq\emptyset$ and that $G-U$ has at least two
  connected components; otherwise, the result is clear.  Let $K$ be the
  graph obtained from~$G$ by the following steps:
  \begin{enumerate}
  \item Contract the edges of a spanning forest of $G-U$.  Now the current
    graph has vertex set~$U\cup W$, where $W$ has one element for each
    component of $G-U$.  The following steps will only add and remove edges
    of this graph.
  \item Delete each edge with both endpoints in~$U$.  Similarly, delete
    each edge with both endpoints in~$G-U$ (such edges are actually loops,
    by the first step).  Now the current graph is bipartite.
  \item On each face of the resulting graph that is not a disk, add edges
    to cut that face into a disk.  This can be done without violating
    bipartiteness, because every face has a boundary component with at
    least one vertex in~$U$ and at least one vertex in~$W$ (since $U$
    and~$W$ are non-empty).
  \item If there exists a face incident with exactly two edges, remove one
    of these two edges.  (The two edges incident to the face are distinct,
    because $\surf$ is not the sphere and the edge is not a loop.)  Repeat
    this step as much as possible.
  \end{enumerate}
  We now have:
  \begin{eqnarray*}
    4|W| & \le & |E(K)|\\
    & \le & 2(|E(K)|-|F(K)|)\\
    & = & 2(|W|+|U|+g-2).\\
  \end{eqnarray*}
  Indeed, the first inequality holds by 4-connectivity of~$G$: since
  $|W|\ge2$, every component of $G-U$ is adjacent to at least four
  different vertices of~$U$; therefore, in~$K$, every vertex of~$W$ is
  adjacent to at least four different vertices of~$U$.  The second line
  follows from the fact that each face is incident to at least four edges
  (by bipartiteness of~$K$ and using Step~4).  The third line holds by
  virtue of Euler's formula, since $K$ is cellularly embedded on~$\surf$.\qed
\end{proof}

\begin{proposition}\label{verticesBound2}
  Let $\surf$ be a surface of Euler genus~$g\geq1$ without boundary, and
  let $G=(V,E)$ be a 4-connected graph embedded on~$\surf$.  Let $M$ be a
  maximum-size matching of~$G$.  Then the number of vertices of~$G$ is at
  most $2|M|+\max\set{1,g-2}$.
\end{proposition}
\begin{proof}
  The Tutte-Berge
  formula~\cite{b-cmg-58,w-spbtf-11}\cite[Sect.~24.1]{s-cope-03} asserts
  that the number of vertices of~$G$ not covered by a maximum-size matching
  of~$G$ is the maximum, over all $U\subseteq V$, of $o(G-U)-|U|$, where
  $o(G-U)$ denotes the the number of components of the graph $G-U$ with an
  odd number of vertices.  By Lemma~\ref{lem:conncomp}, for every
  $U\subseteq V$, we have $o(G-U)-|U|\le\max\set{1,g-2}$.  The result
  follows.\qed
\end{proof}

\begin{proof}[Proof of Theorem~\ref{Th:main2}]
  If $T$ is 4-connected, by Proposition~\ref{verticesBound2}, $T$ has at
  most $2|M|+\max\set{1,g-2}$ vertices where $M$ is a maximum-size matching
  of~$T$.  Using the bound on the size of a maximal matching~$M$
  (Proposition~\ref{sizeMatching}), we deduce that $T$ has at most $h(g)$
  vertices, where $h(1)=55$, $h(2)=163$, and $h(g)=163g-164$ if $g\ge3$.

  If $T$ is not 4-connected, this means that a vertex set~$U$ of size at
  most three separates~$T$.  Actually, $|U|=3$, and $U$ forms a 3-cycle~$C$
  in~$T$.  This cycle~$C$ must be separating, but also non-null-homotopic,
  for otherwise some edge of~$T$ would be contractible (as in the proof of
  Lemma~\ref{L:noncontr}).  Let $\surf_1$ and~$\surf_2$ be the surfaces
  obtained by cutting~$\surf$ along~$C$ and attaching a triangle to each
  copy of~$C$.  The Euler genera of $\surf_1$ and~$\surf_2$ add up to~$g$.
  Furthermore, $C$ is two-sided (since it is separating), so the number of
  3-cycles homotopic to~$C$ in~$\surf$ is at most~$27$
  \cite[Lemma~9]{be-ao2mh-88}.  Any edge that is contractible in $\surf_1$
  or~$\surf_2$ belongs to such a cycle.  So the total number of edges
  in~$\surf_1$ and~$\surf_2$ that are contractible is at most
  $3\times27+3=84$ (the ``$+3$'' term comes from the fact that the three
  edges of~$C$ may be contractible in both $\surf_1$ and~$\surf_2$.)  A
  similar reasoning is used by Barnette and Edelson \cite[Proof of
  Theorem~2]{be-ao2mh-88}.

  It follows that the number of vertices of an irreducible triangulation of
  a surface without boundary with Euler genus~$g$ is bounded from above by
  $f(g)$, where $f$ satisfies the induction formula:
  \[f(g)=\max\left\{h(g),\max_{\substack{g_1+g_2=g\\g_1,g_2\ge1}}\big\{f(g_1)+f(g_2)+84\big\}\right\}.\]
  Thus, we have $f(1)=55$, $f(2)=194$, and for $g\ge3$:
  \[f(g)=\max\left\{163g-164,\max_{\substack{g_1+g_2=g\\g_1,g_2\ge1}}\big\{f(g_1)+f(g_2)+84\big\}\right\}.\]
  It is easily checked by induction that $f(3)=333$ and $f(g)=163g-164$ for
  $g\ge4$.\qed
\end{proof}

\paragraph*{Acknowledgment.}  We would like to thank Gwena\"el Joret, Thom
Sulanke, and David Wood for helpful comments which improved the readability
of this article.

\makeatletter\@ifundefined{url}{\def\url#1{\texttt{#1}}}{}\@ifundefined{prebib%
}{\relax}{\prebib}\makeatother

\end{document}